\newtheorem{theorem}{Theorem}[section]
\newtheorem{lemma}[theorem]{Lemma}
\newtheorem{corollary}[theorem]{Corollary}
\newtheorem{question}[theorem]{Question}
\theoremstyle{definition}
\newtheorem{definition}[theorem]{Definition}
\theoremstyle{remark}
\begin{document}
	\title[The existence of suitable sets in locally compact strongly topological gyrogroups]
	{The existence of suitable sets in locally compact strongly topological gyrogroups}
	
	\author{Jiajia Yang}
	\address{(Jiajia Yang): School of mathematics and statistics, Minnan Normal University, Zhangzhou 363000, P. R. China}
	\email{yjj8030@163.com}
	
	\author{Jiamin He}
	\address{(Jiamin He): School of mathematics and statistics, Minnan Normal University, Zhangzhou 363000, P. R. China}
	\email{hjm1492539828@163.com}
	
	\author{Fucai Lin*}
	\address{(Fucai Lin): 1. School of mathematics and statistics, Minnan Normal University, Zhangzhou 363000, P. R. China; 2. Fujian Key Laboratory of Granular Computing and Application, Minnan Normal University, Zhangzhou 363000, P. R. China}
	\email{linfucai2008@aliyun.com; linfucai@mnnu.edu.cn}

	\thanks{The authors are supported by Fujian Provincial Natural Science Foundation of China (No: 2024J02022) and the NSFC (No. 11571158).}
\thanks{* Corresponding author}
	
	\keywords{Suitable set; strongly topological gyrogroup; locally compact; compactly generated.}
	\subjclass[2000]{22A15, 54D45, 54H11, 54H99}
	
\begin{abstract}
A subset $S$ of a topological gyrogroup $G$ is said to be a {\it suitable set} for $G$ if $S$ is discrete, the gyrogroup generated by $S$ is dense in $G$, and $S\cup \{0\}$ is closed in $G$, where $0$ is the identity element of $G$.	In this paper, it is proved that every locally compact strongly topological gyrogroup has a suitable set, which gives an affirmative answer to a question posed by F. Lin, et al. in \cite{key14}.		
\end{abstract}
	
	\maketitle	
\section{Introduction and Preliminaries}
In 1990, K.H. Hofmann and S.A. Morris in \cite{key12} introduced the concept of suitable set in topological groups, and they proved that each locally compact group has a suitable set. Comfort et al. in \cite{key6} and Dikranjan et al. in \cite{key7, key8} also give some classes of topological groups which have a suitable set, such as, each metrizable topological group has a suitable set.

The gyrogroup is a generalization of group and possess weaker algebraic structures, in which the associative law is not satisfied. Now the gyrogroups have been extensively studied in recent years. Indeed, in 1988, Ungar in \cite{key1988} first introduced the concept of gyrogroups when studying admissible velocities in Einstein velocity addition concerning $c$-ball. In 2017, W. Atiponrat in \cite{key2} gave the notion of topological gyrogroup, that is, a gyrogroup is endowed with a topology such that the multiplication and the inverse are continuous. Then, M. Bao and F. Lin in \cite{key3} studied some particular class of topological gyrogroups and introduced the concept of strongly topological gyrogroups; then they proved that every feathered strongly topological gyrogroup is paracompact, which gives a generalization a well-known theorem in topological groups. In 2020, F. Lin et al. in \cite{key14} consider suitable sets for (strongly) topological gyrogroups, and raised the following open question.

\begin{question}\cite[Question 4.17]{key14}\label{q0}
	Does each locally compact strongly topological gyrogroup have a suitable set?
\end{question}

In this paper, we mainly give an affirmative answer to Question~\ref{q0}.

Throughout this paper, if not specified, we assume that all topological spaces are Hausdorff. Moreover, the sets of the first infinite ordinal and positive integers are denoted by $\omega$ and $\mathbb{N}$ respectively. Readers may refer to \cite{key1, key9, key19} for terminology and notations not
explicitly given here. Next we recall some definitions.

\begin{definition}(\cite{key19}) Let $G$ be a nonempty set and $\oplus \colon G\times G\rightarrow G$ be a binary operation on $G$. Then the pair $(G,\oplus)$ is called a $groupoid$. A function $f$ from a groupoid $(G_{1} ,\oplus _{1})$ to a groupoid $(G_{2} ,\oplus _{2})$ is said to be a $groupoid \ homomorphism$ if $f(x_{1} \oplus _{1}x_{2})=f(x_{1})\oplus _{2} f(x_{2})$ for all $x_{1} ,x_{2} \in G_{1} $. Moreover, a bijective groupoid homomorphism from a groupoid $( G,\oplus)$ to itself will be called a $groupoid\ automorphism$. We mark the set of all automorphisms of a groupoid $(G,\oplus)$ as Aut $(G,\oplus)$.
\end{definition}
	
\begin{definition}(\cite{key19})
Let $(G,\oplus)$ be a nonempty groupoid. We say that $(G,\oplus)$ is a {\it gyrogroup} if the followings hold:
\\\indent(G1) There exists a unique identity element $0 \in G$ such that $0 \oplus x = x= x \oplus 0$ for every $x \in G;$
\\\indent(G2) For every $x \in G$, there exists a unique inverse element $\ominus x \in G$ such that $\ominus x\oplus x=0=x\oplus(\ominus x)$;
\\\indent(G3) For all $x, y \in G$, there exists a gyroautomorphism $gyr[x, y] \in Aut( G,\oplus)$ such that $x \oplus (y \oplus z) = (x \oplus y) \oplus gyr[x, y](z)$ for all $z \in G$;
\\\indent(G4) For any $x, y \in G$, $gyr[x \oplus y, y] = gyr[x, y].$
\end{definition}
	
\begin{definition}(\cite{key19})
Let $(G,\oplus)$ be a gyrogroup. A nonempty subset $H$ of $G$ is called a {\it subgyrogroup}, expressed as $H \leq G$, if the following statements hold:
\smallskip
\\\indent(1) The restriction $\oplus| _{H\times H}$ is a binary operation on $H$, i.e. $(H, \oplus| _{H\times H})$ is a groupoid;
\\\indent(2) For all $x, y \in H,$ the restriction of $gyr[x, y]$ to $H$ is a bijective homomorphism
from $H$ to itself;
\smallskip
\\\indent(3) $(H, \oplus| _{H\times H})$ is a gyrogroup.
\smallskip
\\\indent Furthermore, a subgyrogroup $H$ of $G$ is said to be an {\it $L$-subgyrogroup}, denoted by $H \leq _{L} G$, if $gyr[a, h](H) = H$ for all $a \in G$ and $h \in H$.
\end{definition}

\begin{definition}(\cite{key19})
The gyrogroup cooperation ``$\boxplus$'' is defined by the equation$$x\boxplus y=x\oplus gyr[x,\ominus y](y),\\\ x,y\in G.$$
\end{definition}	
	
\begin{definition}(\cite{key2})
A triple $(G, \tau, \oplus)$ is called a {\it topological gyrogroup} if and only if
\smallskip
\\\indent(1) $(G, \tau)$ is a topological space.
\smallskip
\\\indent(2) $(G, \oplus)$ is a gyrogroup.
\smallskip
\\\indent(3) The binary operation $\oplus \colon G\times G\rightarrow G$ is jointly continuous, where $G\times G$ is endowed with the product topology, and the operation of the inverse $\ominus :G\rightarrow G$, i.e. $x\rightarrow \ominus x$, is also continuous.
\end{definition}
	
\begin{definition}(\cite{key3}) Let $G$ be a topological gyrogroup. We say that $G$ is a {\it strongly \ topological\ gyrogroup} if there exists a neighborhood base $\mu$ of 0 such that for all $U \in \mu$, $gyr[x, y](U) = U$ for any $x, y \in G$. For convenience, we say that $G$ is a {\it strongly topological gyrogroup with neighborhood base $\mu$ of 0}.
\end{definition}	
 A well-known example of a strongly topological gyrogroup, which is not a topological group, is $M\ddot{o}bius$ topological gyrogroup, see \cite{key3}.

\begin{definition}(\cite{key14})
Let $G$ be a topological gyrogroup and $S$ is a subset of $G$. Then $S$ is said to be a {\it suitable set} for $G$ if $S$ is discrete, the gyrogroup generated by $S$ is dense in $G$, and $S\cup \{0\}$ is closed in $G$.
\end{definition}
	
\section{Locally compact strongly topological gyrogroup}
In this section, we mainly prove that every locally compact strongly topological gyrogroup has a suitable set, which gives an affirmative answer to Question~\ref{q0}. First, we give some technical lemmas.
	
\begin{lemma}\cite[Proposition 2.12]{key13}\label{l3}
Let  $(G, \tau, \oplus)$ be a topological gyrogroup and $H$ is a locally compact subgyrogroup of $G$, then $H$ is closed in $G$. 		
\end{lemma}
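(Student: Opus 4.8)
The plan is to adapt the classical argument that a locally compact subgroup of a Hausdorff topological group is closed. The backbone is a purely topological fact: a locally compact (Hausdorff) subspace $H$ of a Hausdorff space $G$ is locally closed, i.e. $H$ is open in its closure $\overline{H}$. Once this is secured, I would invoke the gyrogroup analogues of two standard group-theoretic facts — that the closure of a subgyrogroup is again a subgyrogroup, and that an open subgyrogroup is closed — to upgrade ``open in $\overline{H}$'' to ``equal to $\overline{H}$'', which gives the conclusion.

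First I would prove the local closedness. Fix $x \in H$. Since $H$ is locally compact, choose an open neighborhood $V$ of $x$ in $H$ with $\overline{V}^{H}$ compact, and write $V = W \cap H$ for some open $W \subseteq G$ with $x \in W$. Because $G$ is Hausdorff, the compact set $\overline{V}^{H} = \overline{V}^{G} \cap H$ is closed in $G$; since $V \subseteq \overline{V}^{G} \cap H$, taking $G$-closures yields $\overline{V}^{G} \subseteq \overline{V}^{G} \cap H \subseteq H$. Next, if $y \in W \cap \overline{H}^{G}$, then every neighborhood of $y$ meets $W \cap H = V$ (intersect it with the open set $W$ first), so $y \in \overline{V}^{G} \subseteq H$. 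Hence $W \cap \overline{H}^{G}$ is an $\overline{H}$-open neighborhood of $x$ contained in $H$. As $x \in H$ was arbitrary, $H$ is open in $\overline{H}$.

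To finish, I would pass to the topological gyrogroup $\overline{H}$, here using that the closure of a subgyrogroup is a subgyrogroup (a consequence of joint continuity of $\oplus$, continuity of the inverse, and the gyroautomorphism structure (G3)--(G4)). Then $H$ is an open subgyrogroup of $\overline{H}$, so it is also closed in $\overline{H}$: this is the gyrogroup version of ``open subgroups are closed'', obtained from the partition of $\overline{H}$ into left cosets $a \oplus H$, each of which is open because the left translation $x \mapsto a \oplus x$ is a homeomorphism. But $H$ is dense in $\overline{H}$, so $H = \overline{H}$, i.e. $H$ is closed in $G$.

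The main obstacle is not the point-set topology, which transfers essentially verbatim from the group case, but rather securing the two algebraic inputs in the non-associative setting: that $\overline{H}$ inherits the subgyrogroup axioms (in particular that the gyroautomorphisms restrict correctly to $\overline{H}$), and that the left-coset decomposition of a subgyrogroup genuinely partitions the ambient gyrogroup, so that openness forces closedness. I would isolate each of these as a short preliminary claim, verifying the coset partition via left cancellation together with the homeomorphism property of left translations, and checking the closure-is-subgyrogroup statement by a continuity argument on $\oplus$, $\ominus$, and the gyration identities.
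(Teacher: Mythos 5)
The paper does not prove this lemma at all: it is imported verbatim as \cite[Proposition 2.12]{key13} (Jin--Xie), so there is no in-paper argument to compare yours against. Judged on its own terms, your strategy is the right one and the purely topological half is airtight: the argument that a locally compact subspace of a Hausdorff space is open in its closure is correct as written, and the fact that $\overline{H}$ is again a subgyrogroup follows from the subgyrogroup criterion (closure under $\oplus$ and $\ominus$) together with continuity of $\oplus$, $\ominus$ and of $(x,y,z)\mapsto gyr[x,y](z)=\ominus(x\oplus y)\oplus(x\oplus(y\oplus z))$, which shows $gyr[x,y](\overline{H})\subseteq\overline{H}$ for $x,y\in\overline{H}$.

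The one step that would fail as you have stated it is the appeal to ``the left-coset decomposition of a subgyrogroup genuinely partitions the ambient gyrogroup.'' For a general subgyrogroup this is false: the relation $a\sim b\iff \ominus a\oplus b\in H$ is reflexive, but symmetry already requires $gyr[b,\ominus a](\ominus(\ominus a\oplus b))\in H$ for arbitrary $a,b\in G$, which is exactly the kind of gyration-invariance that only $L$-subgyrogroups are guaranteed to have (this is why Suksumran--Wiboonton prove the coset partition only for $H\leq_L G$, and the lemma here is stated for an arbitrary subgyrogroup). Fortunately you do not need the partition. Since $H$ is open and dense in the topological gyrogroup $\overline{H}$ and left translations of $\overline{H}$ are homeomorphisms, for any $x\in\overline{H}$ the open dense sets $x\oplus H$ and $H$ meet, so there are $h,h'\in H$ with $x\oplus h=h'$. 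The right cancellation law then gives $x=(x\oplus h)\boxminus h=h'\oplus gyr[h',h](\ominus h)$, and since $gyr[h_1,h_2](H)\subseteq H$ for $h_1,h_2\in H$ (a consequence of $H$ being a subgyrogroup), this lies in $H$. Hence $\overline{H}\subseteq H$ and $H$ is closed. Replace the partition claim by this single implication and your proof is complete.
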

	
\begin{lemma}\label{l51}
Let $(G, \tau, \oplus)$ be a topological gyrogroup. If $U$ is an open neighborhood of 0 and $F$ is a compact subset of $G$, then there exists an open neighborhood $V$ of 0 such that $(a\oplus V)\oplus (b\oplus V)\subseteq (a\oplus b)\oplus U$ for every $a, b\in F$.
\end{lemma}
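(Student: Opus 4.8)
The plan is to treat the quantity $(a\oplus v_{1})\oplus(b\oplus v_{2})$ as a continuous function of the four variables $a,b,v_{1},v_{2}$ and to exploit the compactness of $F$ to make the estimate uniform over $a,b\in F$. Concretely, I would introduce the auxiliary map
$$g\colon (F\times F)\times(G\times G)\rightarrow G,\qquad g\bigl((a,b),(v_{1},v_{2})\bigr)=\ominus(a\oplus b)\oplus\bigl[(a\oplus v_{1})\oplus(b\oplus v_{2})\bigr],$$
which is continuous because it is a composition of the jointly continuous operation $\oplus$ and the continuous inversion $\ominus$. Setting $v_{1}=v_{2}=0$ and using $a\oplus 0=a$ and $b\oplus 0=b$ from (G1), we get $g\bigl((a,b),(0,0)\bigr)=\ominus(a\oplus b)\oplus(a\oplus b)=0$ by (G2), for every $(a,b)\in F\times F$. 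Thus $g$ sends the whole ``slice'' $(F\times F)\times\{(0,0)\}$ into the open set $U$.

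Next I would run a standard tube-lemma/compactness argument. For each fixed $(a,b)\in F\times F$, continuity of $g$ at $\bigl((a,b),(0,0)\bigr)$ yields an open neighborhood $O_{(a,b)}$ of $(a,b)$ in $F\times F$ and an open neighborhood $V_{(a,b)}$ of $0$ in $G$ (obtained by shrinking a basic product neighborhood of $(0,0)$ to a symmetric one) such that $g\bigl(O_{(a,b)}\times(V_{(a,b)}\times V_{(a,b)})\bigr)\subseteq U$. Since $F\times F$ is compact, finitely many $O_{(a_{1},b_{1})},\dots,O_{(a_{n},b_{n})}$ cover it; put $V=\bigcap_{i=1}^{n}V_{(a_{i},b_{i})}$, an open neighborhood of $0$. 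Then for arbitrary $a,b\in F$ and $v_{1},v_{2}\in V$, choosing $i$ with $(a,b)\in O_{(a_{i},b_{i})}$ gives $g\bigl((a,b),(v_{1},v_{2})\bigr)\in U$, that is, $\ominus(a\oplus b)\oplus\bigl[(a\oplus v_{1})\oplus(b\oplus v_{2})\bigr]\in U$.

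Finally I would translate this membership back into the desired inclusion. Setting $c=a\oplus b$, the left cancellation law $\ominus c\oplus(c\oplus y)=y$ — equivalently $c\oplus(\ominus c\oplus y)=y$, since $\ominus(\ominus c)=c$ — lets me recover $(a\oplus v_{1})\oplus(b\oplus v_{2})$ from the relation $\ominus c\oplus[(a\oplus v_{1})\oplus(b\oplus v_{2})]\in U$: adding $c$ on the left gives $(a\oplus v_{1})\oplus(b\oplus v_{2})\in c\oplus U=(a\oplus b)\oplus U$. As $a,b\in F$ and $v_{1},v_{2}\in V$ were arbitrary, this is precisely $(a\oplus V)\oplus(b\oplus V)\subseteq(a\oplus b)\oplus U$.

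I expect the main obstacle to be the bookkeeping forced by the absence of associativity: one cannot freely regroup the expression $(a\oplus v_{1})\oplus(b\oplus v_{2})$, so the final cancellation back to $(a\oplus b)\oplus U$ must be justified through the gyrogroup identity $c\oplus(\ominus c\oplus y)=y$ rather than ordinary group cancellation, and the continuity of $g$ must be read off carefully from the topological gyrogroup axioms. The compactness step itself is routine once the correct auxiliary function $g$ has been isolated.
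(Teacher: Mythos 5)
Your proposal is correct and follows essentially the same route as the paper's proof: both establish the inclusion pointwise by continuity of the gyrogroup operations at each $(a,b)$ with $g((a,b),(0,0))=0\in U$, then use compactness of $F\times F$ to pass to a finite subcover and take $V$ to be the intersection of the finitely many resulting neighborhoods of $0$. Your packaging via the auxiliary map $g$ and the final left-cancellation step $c\oplus(\ominus c\oplus y)=y$ merely makes explicit what the paper's displayed set inclusions carry out directly.
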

	
\begin{proof}
For each $a, b\in F$, since $G$ is a topological gyrogroup, there exists an open neighborhood $V_{a, b}$ of 0 such that $$\ominus((a\oplus V_{a, b})\oplus (b\oplus V_{a, b}))\oplus (((a\oplus V_{a, b})\oplus V_{a, b})\oplus ((b\oplus V_{a, b})\oplus V_{a, b}))\subset U.$$
Since $F\times F$ is a compact subset in $G\times G$ and $\{(a\oplus V_{a, b})\times (b\oplus V_{a, b}): (a, b)\in F\times F\}$ is an open cover of $F\times F$, there exists a finite subset $\{(a_{i}, b_{i}): i\leq n\}$ of $F\times F$ such that $F\times F\subset \bigcup_{i=1}^{n}((a_{i}\oplus V_{a_{i}, b_{i}})\times (b_{i}\oplus V_{a_{i}, b_{i}}))$. Now put $V=\bigcap_{i=1}^{n}V_{a_{i}, b_{i}}$. Then, for any $a, b\in F$, there exists $i\leq n$ such that $(a, b)\in (a_{i}\oplus V_{a_{i}, b_{i}})\times (b_{i}\oplus V_{a_{i}, b_{i}})$, which implies that $a\in a_{i}\oplus V_{a_{i}, b_{i}}$ and $b\in b_{i}\oplus V_{a_{i}, b_{i}}$, hence we have
\begin{align*}
\ominus(a\oplus b)\oplus((a\oplus V)\oplus (b\oplus V))&\subseteq \ominus((a_{i}\oplus V_{a_{i}, b_{i}})\oplus (b_{i}\oplus V_{a_{i}, b_{i}}))\oplus (((a_{i}\oplus V_{a_{i}, b_{i}})\oplus V)\\
&\oplus ((b_{i}\oplus V_{a_{i}, b_{i}})\oplus V))\\
&\subseteq\ominus((a_{i}\oplus V_{a_{i}, b_{i}})\oplus (b_{i}\oplus V_{a_{i}, b_{i}}))\oplus (((a_{i}\oplus V_{a_{i}, b_{i}})\oplus V_{a_{i}, b_{i}})\\
&\oplus ((b_{i}\oplus V_{a_{i}, b_{i}})\oplus V_{a_{i}, b_{i}}))\\
&\subseteq U.
\end{align*}
Therefore, we have $(a\oplus V)\oplus (b\oplus V)\subseteq (a\oplus b)\oplus U$ for every $a, b\in F$.
\end{proof}

The following lemma is due to A. Ungar, see \cite{key19}.
	
\begin{lemma}\label{l53}
Let $G$ be a gyrogroup. Then $x\boxplus (\ominus x)=0$ for each $x\in G$.
\end{lemma}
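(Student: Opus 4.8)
The plan is to compute $x\boxplus(\ominus x)$ directly from the definition of the gyrogroup cooperation and reduce it to the elementary identity $x\oplus(\ominus x)=0$. Setting $y=\ominus x$ in the defining equation $x\boxplus y=x\oplus gyr[x,\ominus y](y)$ gives $x\boxplus(\ominus x)=x\oplus gyr[x,\ominus(\ominus x)](\ominus x)$. First I would invoke the uniqueness of inverses from axiom (G2): since $\ominus x\oplus x=0=x\oplus(\ominus x)$, the element $x$ is the inverse of $\ominus x$, so $\ominus(\ominus x)=x$. Hence $x\boxplus(\ominus x)=x\oplus gyr[x,x](\ominus x)$.

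The crux is therefore to identify the gyroautomorphism $gyr[x,x]$, and I claim it is the identity automorphism. To see this I would use axiom (G4), $gyr[a\oplus b,b]=gyr[a,b]$, with $a=0$ and $b=x$; since $0\oplus x=x$, this yields $gyr[x,x]=gyr[0,x]$. It then remains to show $gyr[0,x]=\mathrm{id}$, which follows from the left gyroassociative law (G3): taking the first argument to be $0$ gives $x\oplus z=x\oplus gyr[0,x](z)$ for every $z\in G$, and cancelling $x$ on the left forces $gyr[0,x](z)=z$. Consequently $gyr[x,x](\ominus x)=\ominus x$, and so $x\boxplus(\ominus x)=x\oplus(\ominus x)=0$ by (G2).

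The only real obstacle is supplying the auxiliary facts I have used — the double-inverse identity $\ominus(\ominus x)=x$, the left cancellation law, and $gyr[0,x]=\mathrm{id}$ — from the axioms (G1)--(G4) in case they are not already recorded in the cited sources. These are standard and routine: the left cancellation law is obtained by applying $\ominus a$ on the left of $a\oplus b=a\oplus c$, rewriting via (G3) as $gyr[\ominus a,a](b)=gyr[\ominus a,a](c)$, and using that each $gyr[\ominus a,a]$ is a bijection; from this $gyr[0,x]=\mathrm{id}$ and $\ominus(\ominus x)=x$ follow at once. With these in hand the argument is a short chain of substitutions, and no use of the topology is needed—the statement is purely algebraic, so the hypothesis that $G$ be a \emph{topological} gyrogroup is in fact superfluous here.
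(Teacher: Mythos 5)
Your argument is correct and is essentially the paper's own proof: both compute $x\boxplus(\ominus x)=x\oplus gyr[x,x](\ominus x)$, identify $gyr[x,x]=gyr[0,x]=\mathrm{id}$ via (G4) and the left gyroassociative law, and conclude by (G2). You simply spell out the auxiliary identities ($\ominus(\ominus x)=x$, left cancellation, $gyr[0,x]=\mathrm{id}$) that the paper leaves implicit.
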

	
\begin{lemma}\label{l52}
Let $(G, \tau, \oplus)$ be a strongly topological gyrogroup with a symmetric neighborhood base $\mu$ at 0. If $U\in\mu$ and $H$ is a compact subset of $G$, then there exists $V\in \mu$ such that $(h\oplus V)\boxplus (\ominus h)\subseteq U$ for every $h\in H$.
\end{lemma}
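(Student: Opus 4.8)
The plan is to recognize the left-hand side as the image of a single jointly continuous map and then run a routine compactness argument over $H$. Define $F\colon G\times G\to G$ by $F(h,v)=(h\oplus v)\boxplus(\ominus h)$. The desired conclusion $(h\oplus V)\boxplus(\ominus h)\subseteq U$ is exactly $F(\{h\}\times V)\subseteq U$, so it suffices to produce $V\in\mu$ with $F(H\times V)\subseteq U$. Two facts drive the argument: first, $F(h,0)=h\boxplus(\ominus h)=0\in U$ for every $h\in H$, by Lemma~\ref{l53}; second, $F$ is continuous.

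The only non-obvious point is the continuity of $F$, and for this I would first eliminate the gyration from the cooperation. Using the standard identity $gyr[a,b](c)=\ominus(a\oplus b)\oplus(a\oplus(b\oplus c))$ (immediate from (G3) by left-cancelling $a\oplus b$) together with $\ominus y\oplus y=0$, one obtains $gyr[x,\ominus y](y)=\ominus(x\oplus\ominus y)\oplus x$, hence $x\boxplus y=x\oplus(\ominus(x\oplus\ominus y)\oplus x)$. Thus the cooperation is built from $\oplus$ and $\ominus$ alone and is therefore jointly continuous; substituting $x=h\oplus v$ and $y=\ominus h$ exhibits $F(h,v)=(h\oplus v)\oplus(\ominus((h\oplus v)\oplus h)\oplus(h\oplus v))$ as a composite of continuous maps. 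This is the step I expect to be the main obstacle, since in a general topological gyrogroup the joint continuity of $gyr$ is not directly available; rewriting $\boxplus$ through $\oplus$ and $\ominus$ is what makes the continuity of $F$ unconditional.

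With continuity in hand, the remainder is a tube-lemma argument. Since $F(h,0)=0\in U$ and $F^{-1}(U)$ is open, for each $h\in H$ choose open sets $O_h\ni h$ and $W_h\ni 0$ with $O_h\times W_h\subseteq F^{-1}(U)$. By compactness of $H$ finitely many $O_{h_1},\dots,O_{h_n}$ cover $H$; put $W=\bigcap_{i=1}^n W_{h_i}$, an open neighborhood of $0$. For any $h\in H$ we have $h\in O_{h_i}$ for some $i$, whence $\{h\}\times W\subseteq O_{h_i}\times W_{h_i}\subseteq F^{-1}(U)$, so $F(H\times W)\subseteq U$. Finally, since $\mu$ is a neighborhood base at $0$, choose $V\in\mu$ with $V\subseteq W$; then $(h\oplus V)\boxplus(\ominus h)=F(\{h\}\times V)\subseteq U$ for every $h\in H$, as required. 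I note that this route uses neither the symmetry of $\mu$ nor the strong hypothesis beyond the mere existence of the base $\mu$, which here only records the ambient framework of the paper.
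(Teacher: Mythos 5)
Your proof is correct and is essentially the paper's argument: both reduce the statement to continuity of $(h,v)\mapsto(h\oplus v)\boxplus(\ominus h)$ at the points $(h,0)$, where the value is $0$ by Lemma~\ref{l53}, followed by a finite-subcover argument over the compact set $H$ and an intersection of the finitely many resulting neighborhoods of $0$ (the paper covers $H$ by translates $h\oplus V_h$ where you use general tube-lemma boxes, a cosmetic difference). Your explicit verification that $\boxplus$ is jointly continuous, via the identity $x\boxplus y=x\oplus(\ominus(x\oplus(\ominus y))\oplus x)$, usefully fills in a step the paper leaves implicit.
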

	
\begin{proof}
For each $h\in H$, by Lemma~\ref{l53}, it follows from definition the operation `$\boxplus$' that there exists $V_{h}\in \mu$ such that $((h\oplus V_{h})\oplus V_{h})\boxplus (\ominus (h\oplus V_{h}))\subseteq U$. Since $H$ is compact and $H\subseteq \bigcup _{h\in H}(h\oplus V_{h})$, there is a finite subset $\{h_{1} , ..., h_{n}\} \subset H$ such that $H\subseteq \bigcup _{k=1}^{n}( h_{k}\oplus V_{h_{k}})$. Put $$V=\bigcap _{k=1}^{n}V_{h_{k} }.$$
Clearly, $V$ ia an open neighborhood of $0$ in $G$. For any $h\in H$, there exists $1\leq k\leq n$ such that $h\in h_{k}\oplus V_{h_{k}}$, hence
\begin{align*}
(h\oplus V)\boxplus (\ominus h)&\subseteq ((h_{k}\oplus V_{h_{k}})\oplus V)\boxplus (\ominus(h_{k}\oplus V_{h_{k}}))\\
&\subseteq((h_{k}\oplus V_{h_{k}})\oplus V_{h_{k}})\boxplus (\ominus(h_{k}\oplus V_{h_{k}}))\\
&\subseteq U.
\end{align*}
\end{proof}
	
\begin{lemma}\label{l5}
Let $(G, \tau, \oplus)$ be a strongly topological gyrogroup with a symmetric neighborhood base $\mu$ at $0$. If $U\in\mu$ and $H$ is a compact subset of $G$, then there exists $V\in \mu$ such that $(\ominus h)\oplus (V\oplus h)\subseteq U$ for every $h\in H$.
\end{lemma}
	
\begin{proof}
Clearly, we can choose $W\in\mu$ such that $W\oplus W\subseteq U$. For each $h\in H$, it follows that there exists $V_{h}\in \mu$ such that  $(\ominus h)\oplus (V_{h}\oplus h )\subseteq W$ and $\ominus (V_{h}\oplus h)\oplus (V_{h} \oplus h)\subseteq W$. Since $H$ is compact and $H\subseteq \bigcup _{h\in H}(V_{h}\oplus h) $, there is a finite subset $\{h_{1} , ..., h_{n}\} \subset H$ such that $H\subseteq \bigcup _{k=1}^{n}(V_{h_{k}}\oplus h_{k})$. Put $$V=\bigcap _{k=1}^{n}V_{h_{k}}.$$
Clearly, $V$ ia an open neighborhood of $0$ in $G$. For any $h\in H$, there exists $1\leq k\leq n$ such that $h\in V_{h_{k}} \oplus h_{k}$, hence
\begin{align*}
(\ominus h)\oplus (V\oplus h)&\subseteq \ominus( V_{h_{k} } \oplus h_{k})\oplus (V\oplus (V_{h_{k} }\oplus h_{k}))\\
&\subseteq\ominus (V_{h_{k} } \oplus h_{k} )\oplus (V\oplus (h_{k} \oplus W ))\\
&=\ominus (V_{h_{k} } \oplus h_{k} )\oplus((V\oplus h_{k})\oplus gyr[V, h_{k}](W))\\
&=\ominus (V_{h_{k} } \oplus h_{k} )\oplus((V\oplus h_{k})\oplus W)\\
&\subseteq\ominus (V_{h_{k} } \oplus h_{k} )\oplus((V_{h_{k} }\oplus h_{k})\oplus W)\\
&=(\ominus(V_{h_{k} } \oplus h_{k})\oplus(V_{h_{k} } \oplus h_{k}))\oplus W\\
&\subseteq W \oplus W\\
&\subseteq U.
\end{align*}
\end{proof}
	
A subgyrogroup $N$ of a gyrogroup $G$ is {\it normal} in $G$, denoted by $N \underline{\triangleleft}\ G$, if it is the kernel of a gyrogroup homomorphism of $G$.
We recall the following concept of the coset space of a topological gyrogroup. Clearly, each normal subgyrogroup is an $L$-subgyrogroup by \cite[Proposition 25]{key16}.

Let $(G, \tau, \oplus)$ be a topological gyrogroup and $N$ be a normal subgyrogroup of $G$. Then we can define a binary operation on the coset $G/N$ in the followings hold:$$(x\oplus N)\oplus (y\oplus N)=(x\oplus y)\oplus N,$$ for every $x,y \in G.$ By \cite[Theorem 27]{key16}, it follows that $G/N=\{x\oplus N:x\in G\}$ is a gyrogroup. We denote the mapping $\pi:G\rightarrow G/N, x\mapsto x\oplus N$. Clearly, $ \pi ^{-1} \{\pi(x)\}=x\oplus N $ for any $x\in G$. Denote by $\tau(G)$ the topology of $G$, the quotient topology on $G/N$ is as follows:$$\tau(G/N)=\{O\subseteq G/N:\pi^{-1}(O)\in \tau(G)\}.$$

The following lemma is important in the proof of the following Theorem~\ref{l6}.

\begin{lemma}\label{154}\cite[Theorem 31]{key20}
Let $H$ be a subgyrogroup of a gyrogroup $G$. Then $H\underline{\triangleleft}\ G$ if and only
if the operation on the coset space $G/H$ given by
$$(a\oplus H)\oplus (b\oplus H)=(a\oplus b)\oplus H$$ is well defined.
\end{lemma}

\begin{lemma}\cite[Theorem 3.8]{key3}\label{l1}
Assume that $(G, \tau, \oplus)$ is a topological gyrogroup and $N$ a compact normal subgyrogroup of $G$, then the quotient mapping of $G$ onto the quotient gyrogroup $G/N$ is perfect.
\end{lemma}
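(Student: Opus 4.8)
The plan is to verify directly the three defining properties of a perfect map for the quotient homomorphism $\pi\colon G\to G/N$, $x\mapsto x\oplus N$: that it is a continuous surjection, that every fibre is compact, and that it is a closed map. Continuity and surjectivity are immediate from the construction of the quotient topology $\tau(G/N)$. For the fibres, recall that $\pi^{-1}\{\pi(x)\}=x\oplus N$, and that the left translation $L_{x}\colon y\mapsto x\oplus y$ is a homeomorphism of $G$: it is continuous as a restriction of $\oplus$, and by the left cancellation law $\ominus x\oplus(x\oplus y)=y$ its two-sided inverse is the continuous map $L_{\ominus x}$. Hence $x\oplus N=L_{x}(N)$ is the image of the compact set $N$ under a homeomorphism, so every fibre is compact.

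The heart of the argument is closedness. Let $F$ be closed in $G$. Since $N$ is normal it is an $L$-subgyrogroup, so the left cosets $\{x\oplus N:x\in G\}$ partition $G$; using this together with $0\in N$ one checks both inclusions of the identity $\pi^{-1}(\pi(F))=F\oplus N$. By the definition of $\tau(G/N)$, a set $C\subseteq G/N$ is closed exactly when $\pi^{-1}(C)$ is closed in $G$, so $\pi(F)$ is closed if and only if $F\oplus N$ is closed. It therefore suffices to prove the following: \emph{if $F$ is closed and $N$ is compact, then $F\oplus N$ is closed.}

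To prove this I would argue with nets, which characterise closure in any topological space. Let $y\in\overline{F\oplus N}$ and pick a net with $f_{\alpha}\oplus n_{\alpha}\to y$, where $f_{\alpha}\in F$ and $n_{\alpha}\in N$. Since $N$ is compact we may pass to a subnet so that $n_{\alpha}\to n\in N$. The key algebraic input is the gyrogroup right cancellation law: the equation $x\oplus b=a$ has the unique solution $x=a\boxminus b$, where $a\boxminus b:=a\boxplus(\ominus b)$; in particular $(a\boxminus b)\oplus b=a$ and $f_{\alpha}=(f_{\alpha}\oplus n_{\alpha})\boxminus n_{\alpha}$. Moreover the gyration is given by $gyr[a,b](c)=\ominus(a\oplus b)\oplus(a\oplus(b\oplus c))$, an expression built only from $\oplus$ and $\ominus$, so gyrations are jointly continuous and hence so are $\boxplus$ and $\boxminus$. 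Consequently $f_{\alpha}=(f_{\alpha}\oplus n_{\alpha})\boxminus n_{\alpha}\to y\boxminus n$. As $F$ is closed and each $f_{\alpha}\in F$, the limit $f:=y\boxminus n$ lies in $F$, and then $f\oplus n=(y\boxminus n)\oplus n=y$ exhibits $y\in F\oplus N$. Thus $F\oplus N$ is closed, $\pi$ is a closed map, and $\pi$ is perfect.

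I expect the main obstacle to lie exactly in this closedness step, because of the non-associativity of $\oplus$: unlike in a topological group, one cannot write $f_{\alpha}=(f_{\alpha}\oplus n_{\alpha})\oplus(\ominus n_{\alpha})$, and must instead recover the left factor through the right cancellation law and the auxiliary cooperation $\boxplus$, which secretly carries a gyration. The decisive point is to record that gyrations are jointly continuous via the gyrator identity above; once that is in hand, the remaining steps are routine cancellation manipulations, and the compactness of $N$ supplies the convergent subnet that drives the argument.
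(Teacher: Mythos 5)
The paper does not prove this lemma at all --- it is imported verbatim from \cite[Theorem 3.8]{key3} --- so there is no in-paper argument to compare against. Judged on its own, your proof is correct and complete. The reduction to ``$F$ closed, $N$ compact $\Rightarrow F\oplus N$ closed'' via $\pi^{-1}(\pi(F))=F\oplus N$ is legitimate (the coset identity does require that left cosets of $N$ partition $G$, which you correctly justify through normal $\Rightarrow$ $L$-subgyrogroup), the fibres are compact because left translations are homeomorphisms, and you have isolated the one genuinely gyrogroup-specific difficulty: recovering $f_{\alpha}$ from $f_{\alpha}\oplus n_{\alpha}$ cannot be done by right-multiplying by $\ominus n_{\alpha}$, but the second right cancellation law $(b\boxminus a)\oplus a=b$ together with the uniqueness of solutions of $x\oplus a=b$ gives $f_{\alpha}=(f_{\alpha}\oplus n_{\alpha})\boxminus n_{\alpha}$, and the gyrator identity $gyr[a,b](c)=\ominus(a\oplus b)\oplus(a\oplus(b\oplus c))$ makes $\boxminus$ jointly continuous so the net argument closes. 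The only cosmetic remark is that ``$L_{\ominus x}$ is a two-sided inverse of $L_{x}$'' needs the left cancellation law applied twice (once with $x$, once with $\ominus x$, using $\ominus(\ominus x)=x$), not just once; this is standard and does not affect the argument.
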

	
Similarly to the proof of \cite[Chapter II, Theorem 5.17]{key11} and \cite[Corollary 2.4.8]{key9}, we have the following lemma.
	
\begin{lemma}\label{l2}
Let $G$ and $H$ be topological gyrogroups and $\pi : G \to H$ be a continuous gyrogroup homomorphism. If $\pi$ is a quotient mapping, then $\pi$ is also an open mapping. In particular, if $\pi$ is a perfect mapping, then $\pi$ is an open mapping.
\end{lemma}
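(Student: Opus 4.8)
The plan is to prove openness directly, reducing everything to an understanding of the saturation $\pi^{-1}(\pi(U))$ of an open set $U$. First I would fix an open set $U\subseteq G$ and set $N=\pi^{-1}(0_H)$, the kernel of $\pi$, which is a normal subgyrogroup since $\pi$ is a continuous gyrogroup homomorphism. Using that $\pi$ preserves the identity and inverses, I would check that for $u,x\in G$ we have $\pi(x)=\pi(u)$ iff $\pi(\ominus u\oplus x)=\ominus\pi(u)\oplus\pi(x)=0_H$, i.e.\ iff $\ominus u\oplus x\in N$; then the left cancellation law $u\oplus(\ominus u\oplus x)=x$ turns this into $x\in u\oplus N$. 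This identifies the fibers of $\pi$ with the left cosets of $N$, whence
$$\pi^{-1}(\pi(U))=\bigcup_{u\in U}(u\oplus N)=\bigcup_{n\in N}(U\oplus n).$$

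The second equality is the crucial rewriting: both sides equal $\{u\oplus n:u\in U,\ n\in N\}$, but the right-hand form exhibits the saturation as a union of right translates of the \emph{open} set $U$. I would then use that in a topological gyrogroup every right translation $R_n\colon x\mapsto x\oplus n$ is a homeomorphism. Its continuity is immediate from joint continuity of $\oplus$; for the continuous inverse I would invoke the right cancellation law $(x\oplus n)\boxminus n=x$, so that $R_n^{-1}(y)=y\boxminus n=y\boxplus(\ominus n)$, and note that $\boxplus$ is continuous because $(x,y,z)\mapsto gyr[x,y](z)=\ominus(x\oplus y)\oplus(x\oplus(y\oplus z))$ is assembled from $\oplus$ and $\ominus$ and is therefore jointly continuous. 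Hence each $U\oplus n=R_n(U)$ is open, and $\pi^{-1}(\pi(U))$ is a union of open sets, so it is open.

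Since $\pi$ is a quotient mapping, a set $O\subseteq H$ is open precisely when $\pi^{-1}(O)$ is open in $G$; applying this with $O=\pi(U)$ shows $\pi(U)$ is open. As $U$ was arbitrary, $\pi$ is open. For the final assertion I would simply recall that a perfect map, being a continuous closed surjection, is a quotient mapping, so its openness follows from the part already proved.

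The step I expect to be the main obstacle is the homeomorphism property of right translations: unlike left translations, whose inverse is again a left translation ($L_a^{-1}=L_{\ominus a}$ by left cancellation), the inverse of $R_n$ is expressed through the coaddition $\boxplus$, so one must first confirm that gyrations — and hence $\boxplus$ — are continuous. Once that continuity is secured, the coset computation and the quotient-map characterisation are routine.
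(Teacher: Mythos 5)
Your proof is correct and is precisely the argument the paper has in mind: the paper itself omits the proof, deferring to the classical topological-group analogues (Hewitt--Ross II.5.17 and Engelking), and your saturation computation $\pi^{-1}(\pi(U))=\bigcup_{u\in U}(u\oplus N)=\bigcup_{n\in N}(U\oplus n)$ is the standard route, correctly transplanted to gyrogroups. You also rightly isolate and settle the one genuinely non-group-theoretic point, namely that right translations are homeomorphisms, via the right cancellation law for $\boxminus$ and the continuity of gyrations coming from the gyrator identity.
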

	
\begin{theorem}\label{l6}
Suppose that $(G, \tau, \oplus)$ is a $\sigma$-compact locally compact strongly topological gyrogroup. Then for every countable family $\{U_{n}: n\in \omega\}$ of neighborhoods of 0, there exists a family of symmetric open neighborhoods $\{V_{n}: n\in \omega \}$ of $0$ such that $\overline{V_{0}}$ is compact, $V_{n+1} \oplus V_{n+1}\subset V_{n}\cap U_{n}$, $gyr[x, y](V_{n})=V_{n}$ for each $n\in \omega$, $x, y\in G$ and the following statements hold:
\begin{enumerate}
\item $N=\bigcap _{n\in \omega }V_{n}$ is a compact normal subgyrogroup of $G$.
			
\item $x\oplus N=N\oplus x$ for every $x\in G$.
			
\item  $N$ is an $L$-subgyrogroup of $G$.
			
\item  $G/N$ is a strongly topological gyrogroup which has a countable base.
\end{enumerate}
\end{theorem}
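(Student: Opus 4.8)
The plan is to imitate the Kakutani--Kodaira construction from the theory of locally compact groups, using $\sigma$-compactness to upgrade the translation estimates of Lemmas~\ref{l52} and~\ref{l5} into genuine invariance of the limit set $N$. First I would fix a symmetric $gyr$-invariant neighborhood base $\mu$ at $0$ and, using local compactness, choose $V_0\in\mu$ with $\overline{V_0}$ compact; I also write $G=\bigcup_{n\in\omega}K_n$ with each $K_n$ compact and $K_n\subseteq K_{n+1}$. I then build the $V_n\in\mu$ recursively. Given $V_n$, I first apply Lemma~\ref{l52} and Lemma~\ref{l5} to the compact set $K_n$ with target $V_n$ to obtain $V',V''\in\mu$ controlling the ``conjugates'' $(h\oplus V')\boxplus(\ominus h)$ and $(\ominus h)\oplus(V''\oplus h)$ for all $h\in K_n$; then I choose $V_{n+1}\in\mu$ symmetric with $V_{n+1}\oplus V_{n+1}\subseteq V_n\cap U_n\cap V'\cap V''$. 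Monotonicity of $\oplus$ and $\boxplus$ in each slot transfers the two controls to $V_{n+1}$, and membership in $\mu$ gives $gyr[x,y](V_{n+1})=V_{n+1}$ for all $x,y\in G$. This produces exactly the sequence required in the statement.

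Setting $N=\bigcap_{n}V_n$, the sequence is decreasing because $0\in V_{n+1}$ forces $V_{n+1}\subseteq V_{n+1}\oplus V_{n+1}\subseteq V_n$. The key topological step is $\overline{V_{n+1}}\subseteq V_n$: if $x\in\overline{V_{n+1}}$ then $(x\oplus V_{n+1})\cap V_{n+1}\neq\emptyset$, so $x\oplus u=v$ for some $u,v\in V_{n+1}$, whence $x=(x\oplus u)\oplus gyr[x,u](\ominus u)=v\oplus(\ominus gyr[x,u](u))$; since $V_{n+1}$ is $gyr$-invariant and symmetric, $\ominus gyr[x,u](u)\in V_{n+1}$, so $x\in V_{n+1}\oplus V_{n+1}\subseteq V_n$. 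Hence $N=\bigcap_n\overline{V_{n+1}}$ is closed and contained in the compact set $\overline{V_0}$, so $N$ is compact. That $N$ is closed under $\oplus$ and $\ominus$ follows from $V_{n+1}\oplus V_{n+1}\subseteq V_n$ and symmetry; and since each $gyr[x,y]$ is a bijection fixing every $V_n$, it fixes $N$, i.e. $gyr[x,y](N)=N$ for all $x,y\in G$. This makes $N$ a subgyrogroup and, a fortiori, an $L$-subgyrogroup, giving $(3)$.

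For $(2)$ I would read the coset equality off the two controls using the cancellation laws. Fix $x\in G$, say $x\in K_m$, and $n\in N$. For every $k\geq m$ the stage-$k$ control gives $(x\oplus n)\boxplus(\ominus x)\in(x\oplus V_{k+1})\boxplus(\ominus x)\subseteq V_k$, so $(x\oplus n)\boxplus(\ominus x)\in\bigcap_kV_k=N$; the right cancellation law $(c\boxplus(\ominus x))\oplus x=c$ with $c=x\oplus n$ then yields $x\oplus n\in N\oplus x$. Symmetrically, Lemma~\ref{l5} gives $(\ominus x)\oplus(n\oplus x)\in N$, and applying the left translation $L_x$ (whose inverse is $L_{\ominus x}$, using $gyr[x,\ominus x]=\mathrm{id}$) returns $n\oplus x\in x\oplus N$. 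Hence $x\oplus N=N\oplus x$. Combining $(3)$ with this coset equality and the known characterization of normal subgyrogroups as those $L$-subgyrogroups whose left and right cosets coincide yields that $N$ is normal, which is $(1)$.

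Finally, for $(4)$: by Lemma~\ref{l1} the projection $\pi\colon G\to G/N$ is perfect, hence open by Lemma~\ref{l2}, and $G/N$ is a topological gyrogroup, with Lemma~\ref{l51} supplying the joint continuity of the induced multiplication. Openness of $\pi$ makes each $\pi(V_n)$ an open neighborhood of the identity, and the inclusions $\overline{V_{n+1}}\subseteq V_n$ together with $\bigcap_n\overline{V_{n+1}}=N$ show, via compactness of $N$, that $\{V_n\}$ is a base of neighborhoods of the set $N$; hence $\{\pi(V_n)\}$ is a countable base at the identity of $G/N$. Thus $G/N$ is first countable, so metrizable, and being a $\sigma$-compact locally compact metric space it is second countable, i.e. has a countable base. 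Since $\pi$ intertwines the gyrations, $gyr[\pi x,\pi y](\pi(V_n))=\pi(gyr[x,y](V_n))=\pi(V_n)$, so $\{\pi(V_n)\}$ witnesses that $G/N$ is strongly topological. I expect the main obstacle to be the recursive construction in the first step---keeping the squaring condition, the two translation controls over $K_n$, and membership in $\mu$ simultaneously consistent---together with the careful cancellation-law bookkeeping in the third step needed to convert the $\boxplus$-estimates into the exact coset identity $x\oplus N=N\oplus x$.
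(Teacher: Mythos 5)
Your construction of the sequence $\{V_n\}$, the compactness of $N=\bigcap_n V_n$, the two-sided coset identity $x\oplus N=N\oplus x$ via the $\boxplus$- and conjugation-controls, and the treatment of $(4)$ all track the paper's argument closely and are essentially correct. But there is one genuine gap, and it sits at the heart of item $(1)$: you never establish that $N$ is \emph{normal}, i.e.\ that it is the kernel of a gyrogroup homomorphism. You derive normality from ``the known characterization of normal subgyrogroups as those $L$-subgyrogroups whose left and right cosets coincide,'' but no such characterization is available here (it is the group-theoretic fact, and it does not transfer: in a gyrogroup, non-associativity means that $x\oplus N=N\oplus x$ for all $x$ does not by itself make the coset operation $(x\oplus N)\oplus(y\oplus N)=(x\oplus y)\oplus N$ well defined, and it is precisely that well-definedness, via \cite[Theorem 27]{key16}, which yields normality). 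Without normality you also cannot invoke Lemma~\ref{l1} in part $(4)$, so the quotient $G/N$ is not even known to be a gyrogroup under your argument.

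The paper closes exactly this hole by adding a fourth condition to the recursive construction, obtained from Lemma~\ref{l51} applied to the compact sets $F_n$: for all $x,y\in F_n$, $(x\oplus V_{n+1})\oplus(y\oplus V_{n+1})\subseteq(x\oplus y)\oplus V_n$. Intersecting over $n$ gives $(x\oplus N)\oplus(y\oplus N)\subseteq(x\oplus y)\oplus N$ for all $x,y\in G$ (the reverse inclusion follows from $gyr[x,y](N)=N$), and then \cite[Theorem 27]{key16} delivers that $N$ is a compact normal subgyrogroup and $G/N$ a gyrogroup. In your proposal, Lemma~\ref{l51} appears only incidentally in part $(4)$ for continuity of the quotient operation; you need to pull it into the inductive step alongside Lemmas~\ref{l52} and~\ref{l5} (this is harmless, since it only adds one more member of $\mu$ to intersect at each stage) and then use the resulting coset identity, rather than the left-coset-equals-right-coset claim, to get normality. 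The rest of your argument then goes through as written.
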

	
\begin{proof}
Let $G$ be a strongly topological gyrogroup with a symmetric neighborhood base $\mu$ at 0 such that $gyr[x, y](W)=W$ for any $W\in\mu$ and $x, y\in G$. Suppose that $G=\bigcup _{n\in \omega }F_{n} $, where each $F_{n}$ is a compact subset of $G$, $0\in F_{n}$ and and $F_{n} \subset F_{n+1}, n\in \omega$. By Lemmas~\ref{l51},~\ref{l52} and~\ref{l5}, there exists a subfamily $\{V_{n}: n\in \omega\}\subset\mu$ such that $\overline{V_{0}}$ is compact and, for each $n\in \omega$, the following conditions hold:
		
\smallskip
(i) $V_{n+1} \oplus V_{n+1}\subset V_{n}\cap U_{n}$;
		
\smallskip
(ii) $gyr[x, y](V_{n})=V_{n}$ for any $x, y\in G$;
		
\smallskip
(iii) for any $x\in F_{n} $, $(\ominus x)\oplus (V_{n+1}\oplus x)\subseteq V_{n}$;
		
\smallskip
(iv) for any $x, y\in F_{n}$,  $(x\oplus V_{n+1})\oplus (y\oplus V_{n+1})\subseteq (x\oplus y)\oplus V_{n}$;
		
\smallskip
(v) for any $x\in F_{n}$,  $(x\oplus V_{n+1})\boxplus (\ominus x)\subseteq V_{n}$.
		
(1) Obviously, $\overline{V_{n+1}} \subset V_{n+1} \oplus V_{n+1}$ for each $n\in \omega$, hence $N=\bigcap _{n\in \omega }V_{n}=\bigcap _{n\in \omega }\overline{V_{n+1}} $ is closed in $G$. Therefore, $N\subseteq\overline{V_{0}} $ is compact in $G$. Moreover, it is obvious that $N$ is a subgyrogroup of $G$. Next we prove that $N$ is normal. By Lemma~\ref{154}, it suffices to show that $(x\oplus N)\oplus (y\oplus N)=(x\oplus y)\oplus N$ for any $x, y\in G$. Now fix any $x, y\in G$. Then $(x\oplus y)\oplus N\subset x\oplus (y\oplus gyr[y, x](N))=x\oplus (y\oplus N)$ by (ii), hence $(x\oplus y)\oplus N\subset(x\oplus N)\oplus (y\oplus N)$; moreover, by (iv), we have
\begin{align*}
(x\oplus N)\oplus (y\oplus N)&=(\bigcap_{n\in\omega}(x\oplus V_{n}))\oplus (\bigcap_{n\in\omega}(y\oplus V_{n}))\\
&=\bigcap_{n\in\omega}((x\oplus V_{n+1})\oplus (y\oplus V_{n+1}))\\
&\subset \bigcap_{n\in\omega}((x\oplus y)\oplus V_{n})\\
&=(x\oplus y)\oplus N.
\end{align*}
By the arbitrary choices of $x, y$ in $G$, the subgyrogroup $N$ is normal. Further, we can see that $G/N$ is a gyrogroup, which runs as in the proof of \cite[Theorem 27]{key16} or \cite[Theorem 29]{key20}.
Hence, $N=\bigcap _{n\in \omega }V_{n}$ is a compact normal subgyrogroup of $G$.
		
(2) By \cite[Theorem 32]{key20}, we have $x\oplus N=N\oplus x$ for any $x\in G$.
				
(3) By (1), since $N$ is normal, it follows that $N$ is an $L$-subgyrogroup of $G$.
		
(4) Let $\pi$ be the natural mapping of $G$ onto $G/N$. We claim that $G/N$ is a strongly topological gyrogroup. Indeed, by Lemma~\ref{l1} or \cite[Theorem 27]{key16}, it is easy to see that $G/N$ is a topological gyrogroup. Next we prove that the family $\{\pi(W): W\in\mu\}$ is a symmetric neighborhood base of the identity element $\widetilde{0}$ in $G/N$. Clearly, each $\pi(W)$ is symmetric and open by Lemma~\ref{l2}. Now take any $\widetilde{a}=a\oplus N, \widetilde{b}=b\oplus N\in G/N$ and $U\in\mu$. From \cite[Proposition 23]{key16}, it follows that
$$gyr[\widetilde{a}, \widetilde{b}](\pi(U))=\pi(gyr[a, b](U))=\pi(U).$$ Therefore, $G/N$ is a strongly topological gyrogroup.
				
We show that $\{\pi ( V_{n}) \colon n\in \omega \}$ is a countable base at $\widetilde{0}$ in  $G/N$. Assume that the family $\left\{w\oplus N\colon w\in W \right\}$ is an arbitrary neighborhood of $\widetilde{0}$ in  $G/N$, where $W$ is an open neighborhood of $0$ in $G$. Then there exists $n_{0} \in \omega$ such that $V_{n_{0} } \subset W\oplus N$. Otherwise, the family $\{\overline{V_{n}} \cap (W\oplus N)^{'}:n\in \omega\}$ of compact sets has the finite intersection property and thus $$\bigcap _{} \{\overline{V_{n}} \cap (W\oplus N)^{'}:n\in \omega\}\neq\emptyset.$$ This is impossible since $$\bigcap _{n\in \omega }(\overline{V_{n}} \cap (W\oplus N)^{'})=(\bigcap _{n\in \omega }\overline{V_{n}})\cap(W\oplus N)^{'}=(\bigcap _{n\in \omega }{V_{n}})\cap(W\oplus N)^{'}=N\cap((W\oplus N)^{'})=\emptyset.$$ Hence, $\pi(V_{n_{0} })\subset\{w\oplus N\colon w\in W\}.$ Since $G/N$ is homogeneous by \cite[Theorem 3.13]{key4}, it follows that $G/N$ is first-countable, hence it is metrizable by \cite[Theorem 2.3]{key5}. Moreover, since $G$ is $\sigma$-compact and $G/N$ is the continuous image of $G$, it follows that $G/N$ is $\sigma$-compact. Therefore, $G/N$ is separable since any $\sigma$-compact metrizable space is separable by \cite[Theorem 4.1.15]{key9}, hence it has a countable base.
\end{proof}
	
Let $D$ be an infinite set with the discrete topology and $a\not\in D$. Then $S(D)=D \cup\{a\}$ will denote the one-point compactification of $D$. Clearly, $\{S(D)\setminus F:|F|<\omega, F\subset D\}$ is a family of open neighborhoods at $a$. Therefore, $S(D)$ is a compact Hausdorff space of size $|D|$ having precisely one non-isolated point. The following lemma was given in \cite{key15}.
	
\begin{lemma}\cite[Fact 12]{key15}\label{l7}
Suppose that $X$ is a compact space with a single non-isolated point $x$, $Y$ is an infinite space and $f: X\rightarrow Y$ is a continuous. Then $Y$ is a compact space with a single non-isolated point $f(x)$.
\end{lemma}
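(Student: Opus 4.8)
The plan is to exploit the standard fact that a continuous map from a compact space into a Hausdorff space is automatically closed. Reading the hypothesis as asserting that $f$ maps $X$ \emph{onto} $Y$, we have $Y=f(X)$, so $Y$ is compact at once, being a continuous image of the compact space $X$, and it is Hausdorff by our standing convention. Thus the only genuine content is to identify the non-isolated points of $Y$, and the key tool throughout will be that $f$ is a closed mapping.

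First I would show that every point $y\in Y$ with $y\neq f(x)$ is isolated. Since $y\neq f(x)$, the point $x$ does not lie in $f^{-1}(y)$, so $f^{-1}(y)$ is contained in the set $X\setminus\{x\}$ of isolated points of $X$; being a union of isolated singletons, $f^{-1}(y)=\bigcup_{p\in f^{-1}(y)}\{p\}$ is therefore open in $X$. Hence $A:=X\setminus f^{-1}(y)$ is closed in $X$, and because $f$ is a closed map, $f(A)$ is closed in $Y$. Using surjectivity one checks that $f(A)=Y\setminus\{y\}$: every $z\in Y\setminus\{y\}$ has a preimage $q$, which satisfies $f(q)=z\neq y$ and hence lies in $A$, while no point of $A$ is mapped to $y$. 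Consequently $Y\setminus\{y\}$ is closed, $\{y\}$ is open, and $y$ is isolated.

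It remains to verify that $f(x)$ is genuinely non-isolated, and this is exactly where the hypothesis that $Y$ is infinite is used. Suppose toward a contradiction that $f(x)$ were also isolated; combined with the previous paragraph this would make every singleton of $Y$ open, i.e. $Y$ would be discrete, whereas a compact discrete space is finite, contradicting the infinitude of $Y$. Therefore $f(x)$ is the unique non-isolated point of $Y$. I expect the main (though modest) obstacle to be the middle step, where the isolation of each $y\neq f(x)$ rests on two observations working in tandem: that $f^{-1}(y)$ is open precisely because it avoids the single accumulation point $x$, and that $f$ sends closed sets to closed sets; once the identity $f(A)=Y\setminus\{y\}$ is established, the compactness of $Y$ and the discreteness-versus-infinitude contradiction finish the proof routinely.
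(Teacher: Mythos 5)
Your proof is correct. The paper itself gives no argument for this lemma --- it is quoted verbatim from \cite[Fact 12]{key15} --- and your reasoning (compactness of the continuous image, the closed-map property of continuous maps from compact spaces into Hausdorff spaces to show every $y\neq f(x)$ is isolated, and the ``compact discrete implies finite'' contradiction to show $f(x)$ is not isolated) is the standard argument and the one given in the cited source. Your reading of the hypothesis as $Y=f(X)$ is also the right one: without surjectivity the statement is false, and in the paper's only application (Lemma~\ref{l8}) the codomain is indeed taken to be the image $f(S(X))$.
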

	
Our next conclusion is the key point to establish suitable set in locally compact strongly topological gyrogroups.
	
\begin{lemma}\label{l8}
Let $G$ be a topological gyrogroup and $X$ be an infinite set with a discrete topology. If $f:S(X) \rightarrow G$ is a continuous map such that $f(a) =0$ and $\langle f(S(X))\rangle$ is dense in $G$. Then $ S=f(S(X))\setminus \{0\}$ is a suitable set for $G$.
\end{lemma}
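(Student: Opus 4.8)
The plan is to read off the three defining properties of a suitable set directly from the structure of the one-point compactification $S(X)$ and the single lemma already available, namely Lemma~\ref{l7}. The first observation is purely algebraic: since $f(a)=0$ we have $f(S(X))=S\cup\{0\}$, so the hypothesis that $\langle f(S(X))\rangle$ is dense in $G$ says exactly that $\langle S\cup\{0\}\rangle$ is dense. Because $0$ is the identity element and hence lies in every subgyrogroup, one has $\langle S\rangle=\langle S\cup\{0\}\rangle$, and therefore $\langle S\rangle$ is dense in $G$. This disposes of the density requirement immediately, with no topology involved.

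The topological content is concentrated in a single application of Lemma~\ref{l7}. Recall that $S(X)$ is compact with the unique non-isolated point $a$. If $S$ is finite, then $S\cup\{0\}$ is a finite subset of the Hausdorff space $G$, hence closed and discrete, and the statement is trivial; so I would assume $S$ is infinite. Then $Y:=f(S(X))=S\cup\{0\}$, endowed with the subspace topology, is an infinite space, and $f\colon S(X)\to Y$ is a continuous surjection. Applying Lemma~\ref{l7} with the compact space $S(X)$, its non-isolated point $a$, and the infinite image $Y$, I obtain that $Y=S\cup\{0\}$ is compact and has $f(a)=0$ as its unique non-isolated point.

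From this the two remaining properties follow at once. Since $Y=S\cup\{0\}$ is compact and $G$ is Hausdorff, $S\cup\{0\}$ is closed in $G$, which is the closedness condition. Since $0$ is the \emph{only} non-isolated point of $S\cup\{0\}$, every point of $S$ is isolated in $S\cup\{0\}$ and a fortiori isolated in the subspace $S$; hence $S$ is discrete, and moreover $0$ is the unique accumulation point of $S$, exactly as in the description of a suitable set. Combining density of $\langle S\rangle$, discreteness of $S$, and closedness of $S\cup\{0\}$ completes the argument.

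I do not expect a genuine obstacle here, as the real work is done by Lemma~\ref{l7}; the only points requiring care are the trivial reduction handling finite $S$ and the bookkeeping that lets $f$ be viewed as a continuous surjection onto its image $Y=S\cup\{0\}$, so that the hypotheses of Lemma~\ref{l7} are literally met.
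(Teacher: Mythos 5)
Your proposal is correct and follows essentially the same route as the paper: both arguments dispose of the density condition via $\langle S\rangle=\langle f(S(X))\rangle$, treat the finite case trivially using compactness of the continuous image, and in the infinite case invoke Lemma~\ref{l7} to conclude that $f(S(X))=S\cup\{0\}$ is compact with $0$ as its unique non-isolated point, whence $S$ is discrete and $S\cup\{0\}$ is closed. The only cosmetic difference is that you case-split on whether $S$ is finite while the paper splits on whether $f(S(X))$ is finite, which is the same thing.
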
	
\begin{proof}
If $f(S(X))$ is a finite set, then $S$ is discrete. Since $S(X)$ is a compact space, it follows that $f(S(X))$ is a compact space. Hence, $S\cup\{0\}$ is closed. Because $\langle S \rangle =\langle S\cup\{0\}\rangle=\langle f(S(X))\rangle$ is dense in $G$, we conclude that $S$ is a suitable set for $G$.
		
Suppose now that $f(S(X))$ is an infinite set. By Lemma~\ref{l7}, the space $f(S(X))$ is a compact space with a single non-isolated point $f(a) =0$, where $a$ is the single non-isolated point of $S(X)$. Hence, $S$ is discrete and $S\cup\{0\}$ is compact and closed. By our assumption, the proof is completed.
\end{proof}
	
The proof of the following theorem is similar to \cite[Theorem 9]{key10}. Next, we give out the proof for the reader.
	
\begin{theorem}\label{l9}
If $(G, \tau, \oplus)$ is a compactly generated metrizable topological gyrogroup, then $G$ has a suitable set.
\end{theorem}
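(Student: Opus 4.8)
The plan is to invoke Lemma~\ref{l8} and thereby reduce the theorem to the following task: construct a countable subset $C\subseteq G$ with $0\in C$ such that $C$ is compact with $0$ as its unique non-isolated point (equivalently, every neighbourhood of $0$ omits only finitely many points of $C$) and such that $\langle C\rangle$ is dense in $G$. Granting such a $C$, set $X=C\setminus\{0\}$ with the discrete topology and define $f\colon S(X)\to G$ by $f|_X=\mathrm{id}$ and $f(a)=0$; since every neighbourhood of $0$ contains all but finitely many points of $C$, the map $f$ is continuous, $f(S(X))=C$, and $\langle f(S(X))\rangle=\langle C\rangle$ is dense, so Lemma~\ref{l8} yields that $C\setminus\{0\}$ is a suitable set for $G$.

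To build $C$, first fix a compact set $K$ with $\langle K\rangle=G$; replacing $K$ by $K\cup(\ominus K)\cup\{0\}$ we may assume $0\in K=\ominus K$. Since $G$ is metrizable, fix a compatible metric $\rho$ and a decreasing base $\{U_n:n\in\omega\}$ of symmetric open neighbourhoods of $0$. The compact metric space $K$ is totally bounded, so for each $n$ there is a finite $\varepsilon_n$-dense set $F_n\subseteq K$, and we may arrange $0\in F_0$ and $\varepsilon_n\downarrow 0$, so that $\bigcup_n F_n$ is dense in $K$. The key quantitative input is that the map $(a,b)\mapsto \ominus a\oplus b$ is continuous and vanishes on the diagonal of $K\times K$; by uniform continuity on the compact set $K\times K$ (valued in the metric space $G$), I would shrink the $\varepsilon_n$ so that $\rho(a,b)<\varepsilon_n$ implies $\ominus a\oplus b\in U_n$ for all $a,b\in K$.

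Next I would telescope the nets. For each $x\in F_{n+1}$ choose a parent $p(x)\in F_n$ with $\rho(x,p(x))<\varepsilon_n$ and set the increment $w_x=\ominus p(x)\oplus x$, which lies in $U_n$ by the previous step. The gyrogroup cancellation identity $a\oplus(\ominus a\oplus b)=b$, a consequence of $gyr[a,\ominus a]=\mathrm{id}$, gives $p(x)\oplus w_x=x$. Put $C=F_0\cup\{w_x: n\in\omega,\ x\in F_{n+1}\}$. All increments coming from levels $>N$ lie in $U_N$, while only finitely many increments come from levels $\le N$ and $F_0$ is finite; hence for every $N$ only finitely many points of $C$ lie outside $U_N$, so $0$ is the only possible accumulation point of $C$ (with $C$ finite when $K$ is finite). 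An easy induction on $n$ using $p(x)\oplus w_x=x$ shows $\bigcup_n F_n\subseteq\langle C\rangle$, so $\overline{\langle C\rangle}\supseteq\overline{\bigcup_n F_n}\supseteq K$. Since the closure of a subgyrogroup is again a subgyrogroup (by continuity of $\oplus$ and $\ominus$), $\overline{\langle C\rangle}$ is a closed subgyrogroup containing $K$, whence $\overline{\langle C\rangle}\supseteq\langle K\rangle=G$ and $\langle C\rangle$ is dense.

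The heart of the argument, and the step I expect to be most delicate, is the uniform control of the increments: one must produce a single null sequence $\{\varepsilon_n\}$ of tolerances that works simultaneously for all pairs of net points across the whole compact set $K$, which is precisely where compactness (total boundedness and uniform continuity into a metric space) is essential, and one must check that the telescoping relation $p(x)\oplus w_x=x$ survives the failure of associativity, which is handled by the gyration identity $gyr[a,\ominus a]=\mathrm{id}$. The remaining verifications — the transfer of density from $K$ to $G$ via the closure-of-subgyrogroup fact, and the bookkeeping showing only finitely many points of $C$ lie outside each $U_N$ — are routine.
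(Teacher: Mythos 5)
Your argument is correct, and it reaches the conclusion by a genuinely different mechanism than the paper, even though both proofs share the same skeleton (reduce to Lemma~\ref{l8} by producing a countable set converging to $0$ whose generated subgyrogroup is dense). The paper follows Fujita--Shakhmatov: it fixes a countable dense set $D=\{d_n\}$ of $G$, covers $K$ by finitely many translates $x\oplus V_{n+1}$ to get $G=\langle F_n\cup V_{n+1}\rangle$, and then inductively extracts finite sets $E_n\subseteq V_n$ by observing that each element of the finite set $F_n\cup\{d_n\}$ is a finite word in $E_0\cup\cdots\cup E_{n-1}\cup V_n$, so only finitely many elements of $V_n$ are actually used; density of $\langle\bigcup E_n\rangle$ comes from capturing $D$. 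You instead work entirely inside $K$: you take $\varepsilon_n$-nets $F_n\subseteq K$, use uniform continuity of $(a,b)\mapsto(\ominus a)\oplus b$ on the compact square $K\times K$ to force the telescoping increments $w_x=(\ominus p(x))\oplus x$ into $U_n$, recover the net points via the left cancellation law $a\oplus((\ominus a)\oplus b)=b$ (a standard gyrogroup identity, valid since $\mathrm{gyr}[\ominus a,a]=\mathrm{gyr}[0,a]=\mathrm{id}$ and $\ominus(\ominus a)=a$), and then transfer density from $\bigcup_nF_n\subseteq K$ to $G$ via the fact that the closure of a subgyrogroup is a subgyrogroup containing $K$, hence contains $\langle K\rangle=G$. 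What your route buys is that it avoids both the separate appeal to separability of $G$ and the word-extraction step of the induction; what it costs is the extra (routine but nontrivial) inputs of uniform continuity on $K\times K$, the cancellation identity, and the closed-subgyrogroup fact. Two small points to tidy up: Lemma~\ref{l8} as stated requires $X$ infinite, so when $C\setminus\{0\}$ happens to be finite you should either pad $X$ with extra points mapped to $0$ (the finite-image case is already handled inside the proof of Lemma~\ref{l8}) or note directly that a finite $C$ with $\langle C\rangle$ dense is itself a suitable set; and the identity you invoke is more precisely $\mathrm{gyr}[\ominus a,a]=\mathrm{id}$ (equivalently the left cancellation law) rather than $\mathrm{gyr}[a,\ominus a]=\mathrm{id}$, though both hold in any gyrogroup.
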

	
\begin{proof}
If $G$ is discrete, then it is obvious that $G$ has a suitable set. Now assume that $G$ is non-discrete. Let $\{ V_{n}:n\in \omega\}$ be a symmetric open neighborhood base at 0 such that $V_{0}=G$ and $V_{n+1} \subseteq V_{n}$ for any $n\in \omega$. Suppose that $G=\langle K \rangle $, where $K$ is a compact subset of $G$. Obviously, since $G$ is a compactly generated, it follows that $G$ is $\sigma$-compact; then $G$ is separable since any $\sigma$-compact metrizable space is separable by \cite[Theorem 4.1.15]{key9}. Let $D=\{ d_{n} :n\in \omega \}$ be a countable dense subset of $G$. For every $n\in \omega$, since $\{ x\oplus V_{n+1} :x\in G \}$ is an open cover of $G$ and $K$ is compact, there exists a finite subset $F_{n}$ of $G$ such that $K\subseteq \bigcup \{ x\oplus V_{n+1} :x\in F_{n} \}$, then $$G=\langle K \rangle \subseteq \langle \ \bigcup \{ x\oplus V_{n+1} :x\in F_{n} \} \rangle \subseteq \langle F_{n} \cup V_{n+1} \rangle. $$ Hence, $G=\langle F_{n} \cup V_{n+1} \rangle$.
		
By induction on $n\in\omega$ we will define a sequence $\{E_{n} :n\in \omega\}$ of finite subsets of $G$ with the following properties:
		
\smallskip		
(1)  $E_{n} \subseteq V_{n}$,
		
\smallskip		
(2)  $G\subseteq\langle E_{0} \cup E_{1} \cup \cdot \cdot \cdot \cup E_{n} \cup V_{n+1}\rangle$, and
		
\smallskip			
(3)  $d_{n}\in\langle E_{0} \cup E_{1} \cup \cdot \cdot \cdot \cup E_{n} \rangle$.
		
Set $E_{0} =F_{0} \cup \{d_{0} \}$. Obviously, $E_{0}$ satisfies (1)-(3). Assume that the finite sets $E_{0}, E_{1}, ... ,E_{n-1}$ have been defined satisfying the above properties (1)-(3). Clearly, $$F_{n} \cup \{d_{n} \}\subseteq \langle E_{0} \cup E_{1} \cup \cdot \cdot \cdot \cup E_{n-1} \cup V_{n} \rangle,$$ and since $F_{n}$ is finite, there exists a finite set $E_{n} \subseteq V_{n}$ such that $$F_{n} \cup \{d_{n} \}\subseteq \langle E_{0} \cup E_{1} \cup \cdot \cdot \cdot \cup E_{n-1} \cup E_{n} \rangle.$$ Hence, the above construction is completed.
		
From (1) it follows that the set $S=\bigcup\{E_{n} :n\in \omega\}$ forms a non-trivial sequence converging to 0. By (3), it follows that $D\subseteq \langle S \rangle$, and $\langle S \rangle$ is dense in $G$. Take any bijection $f:S(\mathbb{N})\rightarrow S$ and define also $f(a)=0$, $a \in S(\mathbb{N})$, then $\langle f(S(\mathbb{N})) \rangle=\langle S \rangle$ is dense in $G$. Therefore, by lemma~\ref{l8}, $G$ has a suitable set.
\end{proof}

{\bf Note:} Indeed, in \cite{key18}, we have proved that each metriable topological gyrogroup has a suitable set. However, the method of the proof is different.
	
\begin{theorem}\label{20}
Suppose that $G$ is a topological gyrogroup generated by its open subset with compact closure. Then $G$ has a suitable set.	
\end{theorem}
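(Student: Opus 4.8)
The plan is to use Theorem~\ref{l6} to collapse a compact normal subgyrogroup, to solve the problem on the resulting second-countable metrizable quotient by Theorem~\ref{l9}, and then to transport the result back through the quotient map. Fix an open set $U$ with $\overline{U}$ compact and $G=\langle U\rangle$. First I would record that $G$ is locally compact (each point of the open set $U$ has the compact neighbourhood $\overline{U}$, and left translations are homeomorphisms), $\sigma$-compact (it is the increasing union of the compact sets obtained from $\overline{U}\cup(\ominus\overline{U})\cup\{0\}$ by finitely many applications of $\oplus$, $\ominus$ and the jointly continuous gyrations), and compactly generated, since $G=\langle\overline{U}\rangle$. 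Applying Theorem~\ref{l6} to any countable family of neighbourhoods of $0$, I obtain a compact normal subgyrogroup $N=\bigcap_{n}V_{n}$ with $x\oplus N=N\oplus x$ for all $x\in G$, such that the quotient map $\pi\colon G\to G/N$ is perfect (Lemma~\ref{l1}) and open (Lemma~\ref{l2}) and $G/N$ is a second-countable metrizable strongly topological gyrogroup. Since $\pi$ is a homomorphism, $G/N=\langle\pi(\overline{U})\rangle$ with $\pi(\overline{U})$ compact, so $G/N$ is compactly generated and metrizable; by Theorem~\ref{l9} it has a suitable set, realised by a sequence $\widetilde{s}_{n}\to\widetilde{0}$ with $\langle\{\widetilde{s}_{n}\}\rangle$ dense in $G/N$.

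The reduction isolates two tasks: lifting $\{\widetilde{s}_{n}\}$ and covering the kernel $N$. The mechanism I would use for density is the following: if $T\subseteq G$ satisfies $\pi(\langle T\rangle)$ dense in $G/N$ and $\overline{\langle T\rangle}\supseteq N$, then $\langle T\rangle$ is dense in $G$. Indeed, for nonempty open $V\subseteq G$ the set $\pi(V)$ is open and meets the dense set $\pi(\langle T\rangle)$, so there is $t\in\langle T\rangle$ with $\pi(t)\in\pi(V)$; picking $v\in V$ with $\pi(v)=\pi(t)$ gives $v\in v\oplus N=t\oplus N$, and since $t\in\langle T\rangle$ and $N\subseteq\overline{\langle T\rangle}$ we get $v\in t\oplus N\subseteq\overline{t\oplus\langle T\rangle}\subseteq\overline{\langle T\rangle}$, whence $\overline{\langle T\rangle}=G$. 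Here the identity $x\oplus N=N\oplus x$ is precisely what makes the cosets behave. Thus it suffices to produce a set $T$ that is discrete with unique accumulation point $0$, whose image under $\pi$ generates a dense subgyrogroup of $G/N$, and whose generated subgyrogroup is dense in $N$.

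For the second requirement I would take a suitable set $S_{N}$ for the compact kernel $N$ (so $S_{N}$ accumulates only at $0$ and $\langle S_{N}\rangle$ is dense in $N$), choose a representative $s_{n}\in\pi^{-1}(\widetilde{s}_{n})$ for each $n$, and set $T=\{s_{n}:n\in\omega\}\cup S_{N}$. Then $\pi(\langle T\rangle)\supseteq\langle\{\widetilde{s}_{n}\}\rangle$ is dense and $\overline{\langle T\rangle}\supseteq\overline{\langle S_{N}\rangle}=N$, so $\langle T\rangle$ is dense by the previous paragraph, and an application of Lemma~\ref{l8} to $T\cup\{0\}$ (viewed as the one-point compactification of the discrete set $T$, mapped into $G$ by inclusion with the added point sent to $0$) finishes the argument, provided $T$ is discrete with $0$ as its only accumulation point.

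That last proviso is exactly where the real difficulty lies, and it splits into two interlocking points. First, one must produce a suitable set $S_{N}$ for the compact kernel $N$; since $N$ need not be metrizable (for instance when $G$ is a large compact group, $N$ can be a non-metrizable compact factor, and then no countable suitable set exists), this is the compact case of the very problem under study and calls for a Hofmann--Morris--type construction rather than Theorem~\ref{l9}. Second, the representatives $s_{n}$ must be chosen so that $\{s_{n}\}$ accumulates only at $0$: the fibres $\pi^{-1}(\widetilde{s}_{n})$ meet every neighbourhood of $0$ cofinitely (because $\pi$ is open and $\widetilde{s}_{n}\to\widetilde{0}$), which makes the selection routine when $G$ is first countable, but in the non-metrizable case it requires a careful choice of $s_{n}$ within the cosets $s_{n}\oplus N$, again in the spirit of the compact-group argument. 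These two selections---a suitable set for the compact kernel and a compatible system of fibre representatives---are the main obstacle; once they are in place, the reduction and the coset-density mechanism above assemble a suitable set for $G$.
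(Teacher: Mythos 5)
Your reduction to a single quotient $G/N$ is not sufficient, and the two ``selections'' you defer at the end are not technical afterthoughts but the entire content of the theorem in the non-metrizable case. The first of them is in fact circular: you need a suitable set $S_N$ for the compact kernel $N$, but $N$ need not be metrizable, so Theorem~\ref{l9} does not apply to it, and the existence of suitable sets for compact (strongly topological) gyrogroups is precisely the compact case of the statement being proved --- in the paper it appears only afterwards, as Corollary~\ref{22}, deduced from Theorem~\ref{21}, which in turn rests on Theorem~\ref{20}. One cannot iterate your reduction inside $N$ either, since $N$ is not generated by an open set of $N$ with compact closure in any way that makes the induction terminate. The second deferred step (choosing representatives $s_n\in\pi^{-1}(\widetilde{s}_n)$ that actually converge to $0$ in $G$) is likewise exactly where openness of $\pi$ fails to help when $G$ is not first countable. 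So the proposal, as written, proves the theorem only in the case $\tau\leq\omega$, which is already Theorem~\ref{l9}.

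The paper's proof takes a genuinely different route that avoids both obstacles: instead of one compact normal subgyrogroup, it takes an entire local base $\{U_\alpha:\alpha<\tau\}$ at $0$, produces via Theorem~\ref{l6} a compact normal $L$-subgyrogroup $N_\alpha\subseteq U_\alpha$ with second-countable quotient for \emph{every} $\alpha<\tau$, and then runs a transfinite induction over $\alpha\leq\tau$ building coherent continuous maps $f_\alpha:S(X)\rightarrow G_\alpha$ into the diagonal images $G_\alpha=\varphi_\alpha(G)\subseteq\prod_{\beta<\alpha}G/N_\beta$, where $|X|=\tau$, following Shakhmatov's continuous-selection construction in \cite{key15}. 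The point of working with the whole inverse system at once is that density and the accumulation-only-at-$0$ property are maintained level by level, so no suitable set for any kernel is ever needed and no ad hoc fibre selection is required; at the top level $\varphi_\tau$ is a one-to-one perfect map, hence a topological isomorphism onto $G_\tau$, and Lemma~\ref{l8} applied to $f_\tau$ yields the suitable set. If you want to salvage your outline, the missing idea is exactly this spectral/transfinite machinery; the single-quotient picture cannot be closed up without it.
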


\begin{proof}
Let $O$ be an open subset in $G$ such that $\overline{U}$ is compact and $G=\langle \overline{U}\rangle$. Since $G$ is a homogeneous space, it follows that $G$ is locally compact. Moreover, $G$ a $\sigma$-compact because $G=\langle \overline{U}\rangle$. Therefore, $G$ satisfies the conditions of Theorem~\ref{l6}. Let $\{ U_{\alpha }:\alpha < \tau \}$ is a local base at 0 in $G$. If $\tau\leq \omega$, then the conclusion holds by Theorem~\ref{l9}. Now we can assume that $\tau>\omega$. Let $X$ be a subset of $G$ with $|X|=\tau$. For any $\alpha<\tau$, it follows from Theorem~\ref{l6} that there exists a compact normal $L$-subgyrogroup $N_{\alpha}$ of $G$ such that $N_{\alpha } \subseteq U_{\alpha }$ and $G/N_{\alpha}$ has a countable base. Let $\psi_{\alpha }:G\rightarrow G/N_{\alpha }$ be the quotient map. For any ordinal $\alpha$ satisfying $1\leq \alpha \leq \tau$ define $\varphi _{\alpha }=\Delta\ \{\psi _{\beta }:\beta <\alpha \} :G\rightarrow \prod \{G/N_{\beta}:\beta < \alpha \}$ and $G_{\alpha }=\varphi _{\alpha }(G)$. For $1\leq \beta \leq \alpha \leq \tau$ let $\rho _{\beta }^{\alpha }:\prod \{G/N_{\gamma }:\gamma < \alpha\}\rightarrow\prod \{G/N_{\gamma}:\gamma < \beta \}$ be the natural projection, and define $\pi _{\beta }^{\alpha } = \rho_{\alpha }^{\beta }\upharpoonright _{G_{\alpha }}:G_{\alpha }\rightarrow G_{\beta }$ to be the restriction of $\rho_{\alpha }^{\beta }$ to $G_{\alpha}\subseteq \prod \{G/N_{\gamma}:\gamma < \alpha\}$.
Next we will, by induction, to define a continuous map $f_{\alpha }:S(X)\rightarrow G_{\alpha }$ for each $\alpha$ satisfying $1\leq \alpha \leq \tau$ so that the following conditions hold:

\smallskip		
($\alpha 1$) $f_{\beta }=\pi _{\beta }^{\alpha }\circ f_{\alpha }$ for all $1\leq \beta < \alpha$;

\smallskip		
($\alpha 2$) $f_{\alpha }(a)=0_{G_{\alpha}}$;

\smallskip			
($\alpha 3$) $|\{x\in X:f_{\alpha}(x)\neq 0_{G_{\alpha}}\}|\leq \omega\cdot |\alpha |$;

 \smallskip			
($\alpha 4$) $\langle f_{\alpha}(S(X)) \rangle$ is dense in $G_{\alpha }$.

The above construction proof is similar to that of \cite[Theorem 18]{key15}.

By $(\tau 2)$ and $(\tau 4)$, we have $f_{\tau}(a)=0_{G_{\tau}}$ and $\langle f_{\tau}(S(X)) \rangle$ is dense in $G_{\tau}$. From Lemma~\ref{l8}, $S=f_{\tau}(S(X))\setminus\{0_{G_{\tau}}\}$ is a suitable set for $G_{\tau}$. Observe that $$\mbox{ker}\varphi _{\tau }\subseteq \bigcap \{N_{\alpha }:\alpha <\tau \}\subseteq \bigcap \{U_{\alpha }:\alpha < \tau \}=0,$$ then $\varphi _{\tau }:G\rightarrow G_{\tau}$ is an algebraic isomorphism. Hence $\varphi _{\tau }$ is a perfect map by \cite[Chapter II, Theorem 5.18]{key11} and \cite[Theorem 3.7.10]{key9}. Finally, note that each one-to-one continuous perfect map is a homeomorphism. Hence, $G$ and $G_{\tau}$ are isomorphic as topological gyrogroups. Therefore, $G$ has a suitable set.
\end{proof}
	
Next, let's give an affirmative answer to \cite[Question 4.17]{key14}.
	
\begin{theorem}\label{21}
If $(G, \tau, \oplus)$ is a locally compact strongly topological gyrogroup, then $G$ has a suitable set.	
\end{theorem}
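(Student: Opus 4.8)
The plan is to reduce the general locally compact case to the case already handled in Theorem~\ref{20}, namely a topological gyrogroup generated by an open subset with compact closure (a compactly generated, or ``compactly covered'', situation). The key structural fact about a locally compact group that makes the classical argument work is that the subgroup generated by any compact neighborhood of the identity is an open (hence closed) subgroup which is $\sigma$-compact and compactly generated; I would first establish the analogous fact for strongly topological gyrogroups.

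First I would fix a symmetric open neighborhood $V$ of $0$ with $\overline{V}$ compact, which exists by local compactness, and consider the subgyrogroup $H=\langle \overline{V}\rangle$ generated by $\overline{V}$. I expect that $H$ is open in $G$: since $\overline{V}$ is a neighborhood of $0$ contained in $H$, every point of $H$ is interior (using that left translations $x\mapsto a\oplus x$ are homeomorphisms in a topological gyrogroup, and that $\langle\overline{V}\rangle$ is closed under the operation), so $H$ is open, and an open subgyrogroup is automatically closed because its complement is a union of cosets, each open. This $H$ is locally compact (being open in a locally compact space) and, being generated by the compact set $\overline{V}$, it is visibly a gyrogroup generated by its open subset $V$ with compact closure. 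Moreover $H$ is itself a strongly topological gyrogroup, since restricting the neighborhood base $\mu$ witnessing the strong condition to $H$ gives a base at $0$ whose members are $gyr$-invariant.

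Once $H$ is in hand, Theorem~\ref{20} applies directly to $H$, producing a suitable set $S_H$ for $H$: a discrete set with $S_H\cup\{0\}$ closed in $H$ and $\langle S_H\rangle$ dense in $H$. The remaining task is to promote $S_H$ to a suitable set for all of $G$. The natural device is to pick a set $T\subseteq G$ containing exactly one point from each coset $a\oplus H$ other than $H$ itself (a transversal of the nonidentity cosets), and to set $S=S_H\cup T$. I would argue that $\langle S\rangle$ is dense in $G$: the cosets of $H$ partition $G$ into clopen pieces, $\langle S\rangle$ meets every coset because $T$ hits each coset and $H=\langle S_H\rangle^{-}$ supplies density within the identity coset, and density within $H$ transports to density within each coset via the homeomorphism $x\mapsto a\oplus x$. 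The discreteness of $S$ and closedness of $S\cup\{0\}$ follow because each coset is clopen, $S_H\cup\{0\}$ is closed in the clopen set $H$, and $T$ is a discrete closed set meeting each of the (clopen, mutually separated) nonidentity cosets in a single point, so $0$ remains the only possible accumulation point.

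The main obstacle I anticipate is the density-and-closedness bookkeeping for the transversal $T$ when $G$ has many cosets of $H$. In a group the cosets are uniformly clopen and the transversal is straightforwardly closed-discrete, but here I must be careful that gyroassociativity does not interfere: I should verify that the clopenness of cosets $a\oplus H$ and the homeomorphism property of left translation survive in the gyrogroup setting (which it does, since $\oplus$ and $\ominus$ are continuous and translations are bijections with continuous inverses), and that $\langle S\rangle$ genuinely generates enough of each coset rather than merely meeting it. The cleanest route is to observe that $H$ open implies $\pi:G\to G/H$ (onto the coset space) is an open continuous map onto a discrete space, reducing the coset combinatorics to choosing, for each nonidentity point of the discrete quotient, a single preimage; then $S=S_H\cup T$ is suitable by the clopen-decomposition argument, which I expect to be routine once the openness of $H$ is secured.
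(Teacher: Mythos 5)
Your proposal follows essentially the same route as the paper: pick a neighborhood of $0$ with compact closure, pass to the open subgyrogroup $H$ it generates, apply Theorem~\ref{20} to $H$, and then lift the suitable set from $H$ to $G$; the only difference is that the paper dispatches this last lifting step by citing \cite[Theorem 4.4]{key14}, which is precisely the coset-transversal argument you write out by hand. One small caution for your version of that step: the left cosets of a subgyrogroup need not partition $G$ in general, so you should generate $H$ from a $gyr$-invariant neighborhood taken from the base $\mu$ witnessing the strong condition, which makes $H$ an open $L$-subgyrogroup (as the paper notes) and secures the clopen coset decomposition your transversal argument relies on.
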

	
\begin{proof}
Let $\mathcal{B}$ be a symmetric base at the identity element 0 such that $gyr[x, y](B)=B$ for any $x, y\in G$ and $B\in\mathcal{B}$. Since $G$ is locally compact, it can pick an open neighborhood $U\in \mathcal{B}$ at 0 such that $\overline{U}$ is a compact subset of $G$. Let $H$ be the subgyrgroup generated by $U$. Then it is easy to see that $H$ is an open $L$-subgyrogroup for $G$. In particular, $\overline{U} \subseteq \overline{H}=H$, and so $H$ is generated by its open subset with compact closure. By Theorem~\ref{20}, $H$ has a suitable set.
Hence, by \cite[Theorem 4.4]{key14}, we conclude that $G$ has a suitable set.
\end{proof}
	
\begin{corollary}\label{22}
Each compact strongly topological gyrogroup has a suitable set.
\end{corollary}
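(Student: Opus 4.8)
The plan is to obtain this as an immediate consequence of Theorem~\ref{21}, which has just been established. The only ingredient I need to supply is the elementary topological observation that every compact Hausdorff space is locally compact: given any point, the whole space serves as a compact neighbourhood of it, so the local compactness hypothesis of Theorem~\ref{21} is satisfied automatically. There is nothing gyrogroup-theoretic to add at this stage.

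Concretely, I would argue as follows. Let $(G, \tau, \oplus)$ be a compact strongly topological gyrogroup. Since $G$ is compact and, by the standing convention of this paper, Hausdorff, it is in particular locally compact: for the identity $0$ one may take $G$ itself (or any closed neighbourhood of $0$) as a neighbourhood with compact closure. Hence $G$ meets the hypotheses of Theorem~\ref{21}, and applying that theorem directly produces a suitable set for $G$.

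The main obstacle, if one wishes to call it that, does not reside in the corollary but has already been dispatched upstream: all the substantive work—the construction in Theorem~\ref{l6} of a compact normal $L$-subgyrogroup $N$ with second-countable metrizable quotient $G/N$, the reduction via Lemma~\ref{l8}, the inverse-limit/transfinite argument of Theorem~\ref{20}, and the passage in Theorem~\ref{21} from a generating open subset with compact closure to the entire gyrogroup—has been carried out beforehand. Consequently no new machinery is required here; the corollary is a pure specialization of the locally compact case to the compact case, and its proof amounts to a single invocation of Theorem~\ref{21}.
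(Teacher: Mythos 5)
Your proof is correct and is exactly the intended argument: the paper states this as an immediate corollary of Theorem~\ref{21}, relying on the same observation that a compact Hausdorff gyrogroup is locally compact. Nothing further is needed.
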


\begin{corollary}\cite[Theorem 1.12]{key12}
Each locally compact topological group has a suitable set.
\end{corollary}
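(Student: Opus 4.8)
The plan is to obtain this as an immediate specialization of Theorem~\ref{21}, the key point being that every topological group is a locally compact strongly topological gyrogroup once it is regarded as a gyrogroup under its own multiplication. First I would check that a group $(G,\cdot)$, viewed as a groupoid, is a gyrogroup with all gyroautomorphisms trivial: associativity of the group multiplication gives $x\cdot(y\cdot z)=(x\cdot y)\cdot z$ for all $x,y,z\in G$, so axiom (G3) holds by taking $gyr[x,y]=\mathrm{id}_{G}$, and then (G1), (G2) and (G4) collapse to the familiar group axioms. Thus every topological group is a topological gyrogroup whose identity $0$ is the group unit, with joint continuity of multiplication and continuity of inversion supplied by the topological-group structure.

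Next I would verify that this gyrogroup is in fact strongly topological. Since $gyr[x,y]=\mathrm{id}_{G}$ for all $x,y\in G$, we have $gyr[x,y](U)=U$ for \emph{every} subset $U\subseteq G$; in particular, choosing any neighborhood base $\mu$ of the identity, each $U\in\mu$ satisfies $gyr[x,y](U)=U$ for all $x,y\in G$. Hence $G$ is a strongly topological gyrogroup with neighborhood base $\mu$, and it is locally compact by hypothesis.

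Finally, I would reconcile the two notions of generation and conclude. Because the gyrogroup operation is exactly the associative group multiplication, the subgyrogroup $\langle S\rangle$ generated by a subset $S$ coincides with the subgroup that $S$ generates, so a set is a suitable set for $G$ regarded as a gyrogroup if and only if it is a suitable set for $G$ regarded as a group. Applying Theorem~\ref{21} to $G$ as a locally compact strongly topological gyrogroup then yields a suitable set, which completes the proof. There is no genuine obstacle in this argument; the only step requiring any attention is the (entirely routine) degeneration of the gyrogroup axioms in the associative case, which is precisely what exhibits topological groups as a well-behaved subclass of strongly topological gyrogroups.
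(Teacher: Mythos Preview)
Your proposal is correct and matches the paper's intended argument: the corollary is stated without proof precisely because it is an immediate specialization of Theorem~\ref{21}, using that every topological group is a strongly topological gyrogroup with all gyroautomorphisms trivial. Your additional remark that the gyrogroup-generated set $\langle S\rangle$ coincides with the group-generated set is a worthwhile sanity check, though it is implicit in the paper.
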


{\bf Acknowledgements}. The authors wish to thank
the reviewers for careful reading preliminary version of this paper and providing many valuable suggestions.

\end{document}